\numberwithin{equation}{section}
\theoremstyle{plain}
\newtheorem{thm}{Theorem}
\newtheorem{prop}[thm]{Proposition}
\newtheorem{defn}[thm]{Definition}
\newtheorem*{notation}{Notation}
\newtheorem*{observation}{Observation}
\newtheorem{example}[thm]{Example}
\newcommand{\C}{\mathbb{C}}
\newcommand{\R}{\mathbb{R}}
\newcommand{\N}{\mathbb{N}}
\newcommand{\p}{\partial}
\DeclareMathOperator{\grad}{\p}
\def\({(\!(}
\def\){)\!)}
\title[The \L ojasiewicz exponent]{ The \L ojasiewicz exponent for  weighted
homogeneous polynomials of two real variables }
\author{Ould M Abderrahmane}
\address{D\'eparement de Math\'ematiques, Universit\'e des Sciences,
de Technologie et de Médecine BP. 880, Nouakchott, Mauritanie}
\email{ymoine@ustm.mr} \subjclass[2010]{Primary  14B05,
32S05.}\keywords{{\L}ojasiewicz exponent, Weighted  homogeneous
filtration}
\newcommand{\abstracttext}{ The purpose of this paper is to give the exact value of the
{\L}ojasiewicz exponent for an isolated weighted homogeneous
polynomials of two real variaibles in terms of its weights.}
\begin{document}
\begin{abstract} \abstracttext \end{abstract} \maketitle

Let $f \colon (\R^n, 0) \rightarrow (\R, 0)$ be a analytic function.
The {\L}ojasiewicz exponent $L(f)$ of $f$ is by definition
$$
L(f):=\inf\{\lambda> 0\; \,:\;\, \mid\text{grad} \,f\mid\geq
\text{const.} \mid x\mid^{\lambda}\,\text{ near zero }\},
$$
It is well known (see \cite{Bochnak-Risler, LT}) that the
{\L}ojasiewicz exponent can be calculated by means of analytic paths
\begin{equation}\label{0.1}
L(f)=\sup\left\{\frac{\text{ord}(
\text{grad}f(\varphi(t)))}{\text{ord}(\varphi(t))}\; :\; 0\neq
\varphi(t)\in \R\{t\}^n,\; \varphi(0)=0\right\},
\end{equation}
where $\text{ord}(\phi):=\inf_i\{\text{ord}(\phi_i)\}$ for $\phi\in
{\R\{t\}}^n$. By definition, we put $\text{ord}(0) = +\infty$. It is
also known that $L(f) <+\infty$ if and only if $f$ has an isolated
singularity at the origin. The computation or estimation of the \L
ojasiewicz Exponent is a quite interesting problem not only in
geometric analysis but also in singularity theory. For example,
Kuiper-kuo theorem (\cite{kuiper, kuo}) proved that for any integer
$r$ greater than $L( f)$, $f$ is a $C^0$-sufficient, $r$-jet, i.e.,
adding to the function $f$ monomials of order greater than $L(f)$
does note change its topological type. J. Bochnak  and S. \L
ojasiewicz \cite{BL} showed that $C^0$-sufficiency degree of $f$
(i.e., the minimal integer $r$ such that f is $C^0$-sufficient,
$r$-jet) is equal to $[L(f)]+1$, where $[L(f)]$ denotes integral
part of $L(f)$.
\begin{observation}
 Let $f : (\C^n, 0) \to(\C, 0)$ be weighted homogeneous
polynomials with isolated singularities of degree $d$, and let
$w=(w_1,\dots,w_n)$  be the weights of $f$ i.e., $f$ is weighted
homogeneous of type $(d;w)$, recently the author in \cite{OMA} and
S. Brzostowski \cite{SB} prove that the \L ojasiewicz Exponent of
$f$ is precisely equal to
$$
 L(f)=\max_{ i=1}^n (\frac{d}{w_i} - 1).
$$
\end{observation}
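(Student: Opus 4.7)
The plan is to prove both $L(f)\le\max_i(d/w_i-1)$ and $L(f)\ge\max_i(d/w_i-1)$ via the analytic-path formula \eqref{0.1}.

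For the \emph{lower bound}, I would construct an analytic curve realizing the maximum ratio. Let $i^*$ be an index minimizing $w_i$, so $d/w_{i^*}-1$ is the target value. The naive candidate $\varphi(t)=t\,e_{i^*}$ works only when every monomial of $f$ involving $x_{i^*}$ is a pure power $x_{i^*}^{d/w_{i^*}}$; in general, a mixed monomial $x_{i^*}^{k}x_j$ forces $\partial_j f\circ\varphi$ to have order $(d-w_j)/w_{i^*}<d/w_{i^*}-1$, which spoils the bound. To cure this I would modify $\varphi$ by adding higher-weighted-order terms in the remaining coordinates with carefully chosen (possibly complex) coefficients, iteratively cancelling the offending leading terms of each $\partial_j f\circ\varphi$; the isolated-singularity hypothesis, through the classical structure theorem for weighted homogeneous germs, guarantees that this cancellation can always be completed and terminates in a curve along which $\mathrm{ord}(\mathrm{grad}\,f\circ\varphi)/\mathrm{ord}(\varphi)=d/w_{i^*}-1$.

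For the \emph{upper bound}, I would take an arbitrary analytic curve $\varphi$ with coordinate orders $p_j=\mathrm{ord}(\varphi_j)$ and bound $\mathrm{ord}(\mathrm{grad}\,f\circ\varphi)$ from above in terms of $\mathrm{ord}(\varphi)=\min_j p_j$. The naive weighted estimate
\begin{equation*}
\mathrm{ord}(\partial_j f\circ\varphi)\ \ge\ (d-w_j)\,\min_k(p_k/w_k)
\end{equation*}
is in the wrong direction, so I would invoke the algebraic consequence of the isolated-singularity hypothesis: $J(f)=(\partial_1 f,\dots,\partial_n f)$ is $\mathfrak m$-primary and, for weighted homogeneous $f$, admits explicit relations expressing each $x_i^{\lceil d/w_i\rceil}$ as a combination of the $\partial_j f$ with weighted-homogeneous cofactors. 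Combining such a membership relation with Euler's identity $d\,f=\sum_j w_j\,x_j\,\partial_j f$ and a case analysis on which coordinate $j$ realises the weighted minimum $\min_k(p_k/w_k)$ should yield the required sharp bound.

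The main obstacle in both directions is the mismatch between the unweighted order $\mathrm{ord}(\varphi)=\min_j p_j$ that appears in the denominator of \eqref{0.1} and the weighted order $\min_j p_j/w_j$ that naturally governs the weighted homogeneity of $f$. Bridging this gap requires careful bookkeeping, and it is at this step that the two proofs in \cite{OMA} and \cite{SB} diverge: one proceeds via a Newton-polyhedron / weighted-filtration argument, the other via a direct computation in the graded ring attached to $f$.
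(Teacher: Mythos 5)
First, a remark on scope: the paper does not actually prove this Observation --- it is stated as a record of results from \cite{OMA} and \cite{SB} --- so there is no internal proof to compare against, and your plan must be judged on its own terms. Its overall shape (two inequalities via \eqref{0.1}) is right, but both halves rest on steps that are unjustified, and in one case false as stated. For the upper bound, the claimed ``explicit relations expressing each $x_i^{\lceil d/w_i\rceil}$ as a combination of the $\partial_j f$ with weighted-homogeneous cofactors'' is not a standard fact: whether a given monomial power lies in the Jacobian ideal $J(f)$ is governed by the socle degree $\sum_j (d-2w_j)$ of the Milnor algebra, and $\lceil d/w_i\rceil\,w_i$ need not exceed it. What one does have is membership in the integral closure $\overline{J(f)}$, but that is essentially equivalent to the {\L}ojasiewicz inequality you are trying to prove, so the argument risks circularity. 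Moreover, none of this machinery is needed: Proposition \ref{Hassane} works verbatim for every $n$ and over $\C$, and from $\sum_i\rho^{2w_i}|\partial_i f|^2\gtrsim\rho^{2d}$ one gets $|\mathrm{grad}\,f(x)|\gtrsim\rho(x)^{\,d-w_{\min}}\gtrsim |x|^{\,d/w_{\min}-1}$ exactly as in Case 2 of the proof of Theorem \ref{main}. So the upper bound is the elementary direction, and your plan makes it both harder and shakier than it is.

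The genuinely hard direction is the lower bound, and that is where the gap sits. You assert that a ``classical structure theorem for weighted homogeneous germs'' guarantees that the offending leading terms of all the $\partial_j f\circ\varphi$, $j\ne i^*$, can be cancelled simultaneously and that the iteration terminates in a curve with ratio exactly $d/w_{i^*}-1$; no such theorem exists in the form you need, and this claim is precisely the content of \cite{OMA} and \cite{SB}. The clean way to organize the construction is to note that the polar variety $\Gamma=\{\partial_j f=0:\ j\ne i^*\}$ is positive-dimensional (because $J(f)$ is $\mathfrak m$-primary) and invariant under the weighted $\C^*$-action, hence a finite union of orbit closures $t\mapsto t\cdot a$; along such an orbit $\mathrm{ord}(\mathrm{grad}\,f\circ\varphi)=d-w_{i^*}$ while $\mathrm{ord}(\varphi)=\min\{w_j:\ a_j\ne 0\}$. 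The entire difficulty is then to show that some branch satisfies $\min\{w_j:\ a_j\ne 0\}=w_{i^*}$, i.e.\ that $\Gamma$ is not trapped in the coordinate hyperplanes of minimal weight, or to handle the degenerate configurations by other means. Your sketch does not address this point at all, and it is not a formality: over $\R$ the needed branch can genuinely fail to exist, which is exactly the dichotomy of Theorem \ref{main}, where Case 1 ($\{\partial_{x_1}f=0\}\subset\{x_2=0\}$) produces the strictly smaller exponent $(d-w_1)/w_2$. Any correct proof of the complex statement must contain an argument ruling out (or circumventing) this degeneration over $\C$.
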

Estimates of the \L ojasiewicz Exponent for weighted homogeneous
isolated singularities in the real cases  are in a recent paper by
Haraux and Pham  \cite{AH}.

Motivated by the above observation, we are looking to establish the
\L ojasiewicz Exponent for  the classes of weighted homogeneous
polynomials of two real variables in terms of the weights. To prove
the main result (Theorem \ref{main} below), we recall
 the notion of weighted homogenous filtration introduced by
Paunescu in \cite{LP}. By using it and the generalized Euler
identity, we can compute the {\L}ojasiewicz exponent of weighted
homogeneous polynomials of two real variables.

\begin{notation}
To simplify the notation, we will adopt the following conventions\,:
for a function $g(x, y)$ we denote by $\grad g$ the gradient of $g$
and by $\grad_x g$ the gradient of $g$ with respect to variables
$x$.

Let $\varphi,\,\, \psi \colon(\R^n, 0) \to \R$ be two function
germs. We say that $ \varphi(x)\lesssim \psi(x)$ if there exists a
positive constant $C> 0$ and an open neighborhood $U$ of the  origin
in $\R^n$ such that $\varphi(x)\leq C \; \psi(x)$, for all $x \in
U$. We write $ \varphi(x) \sim \psi(x)$  if $\varphi(x) \lesssim
\psi(x)$ and $\psi(x)\lesssim \varphi(x)$. Finally,
$|\varphi(x)|\ll|\psi(x)|$ (when $x$ tends to $x_0$) means
$\lim_{x\to x_0}\frac{\varphi(x)}{\psi(x)}=0$.

\end{notation}

\bigskip
\section{Weighted  homogeneous filtration, main results
 }
\bigskip

Let $\N$ be the set of nonnegative integers and  $\mathcal{O}_n$
denote the ring of analytic function germs $f \colon (\R^n, 0) \to
(\R, 0)$.

From now, we shall fix a system of positive integers $w=(w_{1},
\dots , w_{n})\in (\N-\{0\})^n$, the weights of variables $x_{i},$
$w(x_{i})=w_{i},$  $1\leq i\leq n$, and a positive integer $d$,
 then a polynomial $f \in\R[x_1,\dots, x_n]$ is called weighted
homogeneous of degree $d$ with respect the weight
$w=(w_1,\dots,w_n)$ (or type $(d; w)$) if $f$ may be written as a
sum of monomials $x_1^{\alpha_1}\cdots x_n^{\alpha_n}$ with
\begin{equation}\label{weight}
\alpha_1 w_1 +\dots +\alpha_n w_n =d.
\end{equation}
 We say that an  analytic function $f\colon (\R^n, 0) \to
(\R, 0)$ is non-degenerate if $\{\frac{\p f}{\p_{x_1}} (x) = \cdots=
\frac{\p f}{\p_{x_n}}(x) = 0\} = \{0\}$ as germs at the origin of
$\R^n$.

We may introduce (see \cite{LP}) the function $\rho(x)=
\left(|x_1|^{\frac{2}{w_1}}+\cdots
+|x_n|^{\frac{2}{w_n}}\right)^{\frac{1}{2}} $. We also consider the
spheres associated to this $\rho$
$$
S_{r}=\{x\in \R^{n}\;:\; \,\rho(x)=r\}, \quad \; r>0.
$$

Here $\cdot$ means the weighted action, with respect to the $\R^*$
action defined below
$$
t\cdot x=(t^{w_1}x_1,\dots, t^{w_n}x_n)
$$

\begin{defn} Using $\rho$,
we define a singular Riemannian metric on $\R^n$ by the following
bilinear form
$$
\langle \rho^{w_i}\frac{\partial}{\partial x_i}\,,\,
\rho^{w_j}\frac{\partial}{\partial x_j} \rangle =
\delta_{i,j}:=\begin{cases}
1 & \text{ if } i=j\\
0 & \text{ if } i\neq j
\end{cases}
$$

 We will denote by $\text{grad}_w$ and  $\parallel\; \parallel_w$, the corresponding gradient and norm
associated with this Riemannian metric  (for more details about
these see \cite{LP}).
\end{defn}

Let $f\in \mathcal{O}_n$. We denote the Taylor expansion of $f$ at
the origin by $\sum c_{\nu}x^{\nu}$. Setting $H_j(x)=\sum
c_{\nu}x^{\nu}$ where the sum is taken over $n$ with $<w, \nu>=
w_1\nu_1+\cdots+w_n\nu_n=j$, we can write the weighted homogeneous
Taylor expansion $f$
$$
f(x)=H_d(x)+H_{d+1}(x)+\cdots\; ; H_d \neq  0.
$$
We call $d$ the  weighted degree of $f$ and  $H_d$ the weighted
initial form of $f$ about the weight. Furthermore, for any $f\in
\mathcal{O}_n$ we get
\begin{equation}\label{1.1}
\|\text{grad}_wf(x)\|_w\lesssim \rho^{d_w(f)}(x),
\end{equation}
where $d_w(f)$ denotes the degree of $f$ with respect to $w$.
Indeed, as all nonzero $x$, we find $\frac{1}{\rho(x)}\cdot x\in
S_1$, moreover, we have $\frac{\partial H_j}{\partial x_i}$  is zero
or a  weighted homogeneous polynomial of degree $d-w_j$, then we
obtain
$$
\|\text{grad}_w H_j(\frac{1}{\rho(x)}\cdot x)\|_w=
\frac{\|\text{grad}_w H_j(x)\|_w}{\rho(x)^j} \lesssim 1.
$$
Therefor,
$$
\parallel\text{grad}_wf(x)\parallel_w\lesssim\sum_{j\geq d_w(f)}\parallel \text{grad}_wH_j(x) \parallel_w\lesssim
\rho^{d_w(f)}(x).
$$

\begin{prop}\label{Hassane}
Let   $f\in \mathcal{O}_n$ be a weighted homogeneous isolated
singularity of type $(d; w)$ at $0\in \R^n $. Then
\begin{equation}\label{1.2}
\|\text{grad}_wf(x)\|_w \gtrsim \rho(x)^d.
\end{equation}
\end{prop}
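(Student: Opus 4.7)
The plan is to combine the weighted $\R^*$-homogeneity of the function $x\mapsto\|\text{grad}_w f(x)\|_w$ with a compactness argument on the weighted unit sphere $S_1=\{\rho=1\}$. From the definition of the singular Riemannian metric one reads off, for $x\ne 0$, the coordinate expression
$$\|\text{grad}_w f(x)\|_w^{\,2} \;=\; \sum_{i=1}^n \rho(x)^{2w_i}\Bigl(\tfrac{\p f}{\p x_i}(x)\Bigr)^{\!2}.$$
Since $f$ is weighted homogeneous of type $(d;w)$, each nonzero partial $\p f/\p x_i$ is weighted homogeneous of degree $d-w_i$, and $\rho(t\cdot x)=t\rho(x)$ for $t>0$. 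Substituting these facts into the formula above gives the key homogeneity identity $\|\text{grad}_w f(t\cdot x)\|_w = t^d\,\|\text{grad}_w f(x)\|_w$.

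Next, note that on $S_1$, where $\rho=1$, the weighted norm coincides with the usual Euclidean norm of $\nabla f$. The isolated-singularity hypothesis together with the weighted homogeneity of each $\p f/\p x_i$ in fact forces $\nabla f$ to vanish only at $0$ on all of $\R^n$: a critical point $x_0\ne 0$ would generate an entire $\R^*$-orbit $\{t\cdot x_0\}_{t>0}$ of critical points accumulating at the origin, contradicting the hypothesis. Hence $\|\text{grad}_w f\|_w$ is continuous and strictly positive on the compact set $S_1$, and
$$c\;:=\;\min_{y\in S_1}\|\text{grad}_w f(y)\|_w\;>\;0.$$

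For arbitrary $x\ne 0$, set $y=(1/\rho(x))\cdot x\in S_1$; the homogeneity established above then immediately yields
$$\|\text{grad}_w f(x)\|_w \;=\; \rho(x)^d\,\|\text{grad}_w f(y)\|_w \;\ge\; c\,\rho(x)^d,$$
which is precisely \eqref{1.2}. The only real obstacle in this plan is the first step, namely extracting the explicit coordinate formula for $\|\text{grad}_w f\|_w^{\,2}$ from the singular Riemannian metric and then carefully tracking the weights $w_i$ and $d-w_i$ through the homogeneity calculation; after that, the argument is a standard compactness-and-scaling reduction.
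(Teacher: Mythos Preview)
Your proof is correct and follows essentially the same approach as the paper's: both establish the homogeneity identity $\|\text{grad}_w f(t\cdot x)\|_w = t^d\,\|\text{grad}_w f(x)\|_w$ and combine it with a compactness/positivity argument on a $\rho$-level sphere. The only cosmetic difference is that the paper works on $S_r$ for small $r$ (using the isolated-singularity hypothesis directly as a germ statement), whereas you first observe that weighted homogeneity propagates critical points along $\R^*$-orbits and then work on $S_1$.
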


\begin{proof}
Since $f$ has only  isolated singularity at the origin, then  for
small values of $r$  we have
\begin{equation}\label{1.3}
\| \text{grad}_w f(x)\|_w = \left( \sum_{i=1}^n |\rho^{w_i}(x)
\frac{\partial f}{\partial z_i}(x) |^2 \right)^{\frac{1}{2}}\gtrsim
1, \; \forall x\in S_r.
\end{equation}
 On the other hand, $\frac{\partial f}{\partial x_i}$ is  weighted
 homogeneous of degree $d-w_i$ for $i=1,\dots, n$ and also, $\frac{r}{\rho(x)}\cdot x\in S_r$ for all nonzero $x$.
  Thus, by (\ref{1.3}) we obtain
$$
\| \text{grad}_w f(\frac{r}{\rho(x)}\cdot x)\|_w=r^d\frac{\|
\text{grad}_w f(x)\|_w}{\rho(x)^d}\gtrsim 1.
$$
This completes the proof of the proposition.
\end{proof}


The main result of this paper is the following:

\begin{thm}\label{main}
Let $f\colon (\R^2, 0) \to (\R, 0)$ be non-degenerate weighted
homogeneous polynomial of type $(d; w_1, w_2)$ such that $w_1\ge w
_2$. Then

$$
L(f)=\begin{cases}
 \frac{d-w_1}{w_2} & \text{ if } \;\{\frac{\p f}{\p x_1}(x_1,x_2)=0\}\subset\{x_2=0\},\\
\frac{d}{w_2} -1 & \text{ if not.}
\end{cases}
$$
\end{thm}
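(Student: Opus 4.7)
The plan is to use the parametric characterization (\ref{0.1}) of $L(f)$ in both directions, combining Proposition \ref{Hassane} with the generalized Euler identity $df = w_1 x_1 \partial_1 f + w_2 x_2 \partial_2 f$. For the universal upper bound $L(f) \le d/w_2 - 1$ I would take any arc $\varphi = (\varphi_1, \varphi_2)$ with component orders $\alpha_i = \text{ord}(\varphi_i)$; Proposition \ref{Hassane} yields $\text{ord}(\|\text{grad}_w f(\varphi)\|_w) \le d \min(\alpha_1/w_1, \alpha_2/w_2)$, and since $\|\text{grad}_w f\|_w^2 = \rho^{2w_1}(\partial_1 f)^2 + \rho^{2w_2}(\partial_2 f)^2 \le \rho^{2w_2}|\text{grad} f|^2$ near the origin (using $w_1 \ge w_2$ and $\rho \le 1$), this converts to $\text{ord}(\text{grad} f(\varphi)) \le (d-w_2) \min(\alpha_1/w_1, \alpha_2/w_2)$; a short case distinction on $\alpha_1$ versus $\alpha_2$ produces the ratio bound $(d-w_2)/w_2$.

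For the matching Case 2 lower bound, I would pick $\xi = (\xi_1, \xi_2)$ with $\xi_2 \ne 0$ and $\partial_1 f(\xi) = 0$; the Euler identity together with non-degeneracy forces $\partial_2 f(\xi) \ne 0$ (else $\xi$ is a non-trivial critical point), and the orbit arc $\varphi(t) = t \cdot \xi$ has $\text{ord}(\text{grad} f(\varphi)) = d - w_2$ and $\text{ord}(\varphi) = w_2$, giving exactly the required ratio. The Case 1 lower bound is realized by the arc $\varphi(t) = (0, t)$: the hypothesis $\{\partial_1 f = 0\} \subset \{x_2 = 0\}$ forces $\partial_1 f(0, x_2)$, which is weighted homogeneous in $x_2$ alone, to equal the non-zero monomial $a\, x_2^{(d-w_1)/w_2}$, yielding the ratio $(d-w_1)/w_2$. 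The matching upper bound is the delicate step: I would factor $\partial_1 f = x_2^e\, g(x_1, x_2)$ by the maximal power of $x_2$ dividing it and observe that Case 1 combined with the maximality of $e$ forces $g$ to be non-vanishing on the weighted sphere $S_1$, so that $|\partial_1 f(x)| \gtrsim |x_2|^e \rho(x)^{d-w_1-e w_2}$ everywhere. When $e = 0$ this already yields $|\partial_1 f| \gtrsim \rho^{d-w_1} \gtrsim |x|^{(d-w_1)/w_2}$ globally. When $e \ge 1$ I would split $\R^2 \setminus \{0\}$ into the weighted conic neighborhood $\{|x_2| < \delta \rho^{w_2}\}$ of the $x_1$-axis and its complement: outside the neighborhood, the previous bound gives $|\partial_1 f| \gtrsim \rho^{d-w_1} \gtrsim |x|^{(d-w_1)/w_2}$; inside, non-degeneracy at $(\pm 1, 0) \in S_1$ together with continuity of $\partial_2 f$ on $S_1$ yields $|\partial_2 f(x)| \gtrsim \rho^{d-w_2}$, and using $|x| \sim \rho^{w_1}$ near the axis this gives $|\partial_2 f| \gtrsim |x|^{(d-w_1)/w_2}$ provided $d \ge w_1 + w_2$.

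The main obstacle is showing that this inequality $d \ge w_1 + w_2$ is automatic in the sub-case $e \ge 1$. This follows from the structural observation that here $\partial_1 f(x_1, 0) \equiv 0$, so non-degeneracy at $(x_1, 0)$ for $x_1 \ne 0$ forces $\partial_2 f(x_1, 0)$, weighted homogeneous in $x_1$ of degree $d - w_2$, to be a non-zero monomial $c\, x_1^{(d-w_2)/w_1}$ with $c \ne 0$, whence $d = m w_1 + w_2$ for some integer $m \ge 1$. Assembling the region estimates then completes the proof of Case 1.
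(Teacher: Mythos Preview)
Your overall architecture is sound and largely parallels the paper: the universal upper bound $L(f)\le (d-w_2)/w_2$ via Proposition~\ref{Hassane} together with $\|\mathrm{grad}_w f\|_w\le \rho^{w_2}|\grad f|$ is exactly the paper's Case~2 upper bound, your orbit arc $t\cdot\xi$ is the paper's Case~2 lower bound, and your arc $(0,t)$ for the Case~1 lower bound is a legitimate variant of the paper's choice $(t^{w_1},t^{w_2})$. Your $e=0$ sub-case also coincides with the paper's.

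The gap is in the $e\ge 1$ sub-case of the Case~1 upper bound. Inside the weighted cone $\{|x_2|<\delta\rho^{w_2}\}$ you assert $|x|\sim\rho^{w_1}$, but this is false: one does get $\rho\sim|x_1|^{1/w_1}$ there, hence $|x_1|\sim\rho^{w_1}$, yet $|x_2|$ may be of order $\tfrac{\delta}{2}\rho^{w_2}$, which for $w_1>w_2$ and small $\rho$ is \emph{much larger} than $\rho^{w_1}$; at such points $|x|\sim\rho^{w_2}$ and your bound $|\partial_2 f|\gtrsim\rho^{d-w_2}$ only gives the weak exponent $(d-w_2)/w_2$. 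The repair is a three-region split: on $\{|x_2|\ge\delta\rho^{w_2}\}$ and on $\{|x_2|\ge|x_1|\}$ the estimate $|\partial_1 f|\gtrsim|x_2|\,\rho^{d-w_1-w_2}$ already yields $|x|^{(d-w_1)/w_2}$; only on $\{|x_2|<\delta\rho^{w_2}\}\cap\{|x_2|<|x_1|\}$ does one genuinely have $|x|\sim|x_1|\sim\rho^{w_1}$, and there your $\partial_2 f$ estimate goes through under $d\ge w_1+w_2$ as you wrote.

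For comparison, the paper avoids any region split in this sub-case. Observing that $e\ge1$ forces $x_2\mid f$, it writes $f=x_2\varphi$ with $\varphi$ weighted homogeneous of degree $d-w_2$ and $\partial_1\varphi$ having an isolated zero at the origin, then substitutes the Euler identity for $\varphi$ into $|\grad f|^2=|x_2\partial_1\varphi|^2+|\varphi+x_2\partial_2\varphi|^2$ to obtain a quadratic form in $(\partial_1\varphi,\partial_2\varphi)$ whose leading coefficient is $\bigl(\tfrac{w_1}{d-w_2}x_1\bigr)^2+x_2^2\sim\|x\|^2$. Because $(\partial_1\varphi)^2$ has strictly smaller weighted degree than the cross term and than $(\partial_2\varphi)^2$, this term dominates globally near~$0$, giving $|\grad f|\gtrsim\|x\|\,\rho^{d-w_1-w_2}\gtrsim\|x\|^{(d-w_1)/w_2}$ in one stroke. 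Your geometric splitting is more hands-on, the paper's algebraic rewriting more uniform; both lead to the same conclusion once your cone claim is corrected.
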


\section{Proof of Theorem \ref{main}}

We first note that in the  case where $w_1=w_2$ (i.e, homogenous
filtration), so we can find from the (\ref{1.1}) and (\ref{1.2})
that
$$
\|\text{grad}_wf(x)\|_w \sim \rho(x)^d.
$$
Butt $\|\text{grad}_wf(x)\|_w\sim \parallel x
\parallel\;\parallel\p f\parallel $ and $ \rho(x)^d\sim \parallel x \parallel^{\frac{d}{w_1}}=\parallel x
\parallel^{\frac{d}{w_2}}$.
 Hence, we get
 $$
 L(f)=\frac{d}{w_2} -1=\frac{d-w_1}{w_2}.
 $$
From now, we suppose  that $w_1>w_2$. There are two cases to be
considered.\\

{\bf Case 1.} In this case, we suppose that $\{\frac{\p f}{\p
x_1}(x_1,x_2)=0\}\subset\{x_2=0\}$, take an analytic path $\varphi
(t)=(t^{w_1},t^{w_2})=t\cdot(1,1)$, then from (\ref{0.1}) we get
$$
L(f)\geq \frac{\text{ord}(\grad
f(\varphi(t))}{\text{ord}(\varphi(t))} =\frac{d-w_1}{w_2}.
$$

 Since $f$ defining an isolated singularity at the origin $0\in
\R^n$, there exist the terms

$\varphi \colon (\R^2, 0) \to (\R, 0)$ with  the origin is an
isolated zero of $\p_x\varphi$ i.e., $\{\p_{x_1}\varphi=0\}=\{0\}$
and
$$
f(x_1,x_2)=\varphi(x_1,x_2) \;\text{ or
}\;f(x_1,x_2)=x_2\varphi(x_1,x_2).
$$
For the case where the origin is an isolated zero of $\p_{x_1}f$,
since $\p_{x_1}f$ $w$-form of degree $d-w_1$, it follow from
(\ref{1.3}) that
$$
\parallel \p f\parallel\gtrsim \rho^{d-w_1}\gtrsim (\mid x_1\mid^{1/w_1}+\mid x_2\mid^{1/w_2})^{d-w_1}\gtrsim \parallel
(x_1,x_2)\parallel^{\frac{d-w_1}{w_2}}.
$$
Hence $L(f)\leq \frac{d-w_1}{w_2}$.

This ends the proof of Theorem \ref{main} in the case where the
origin is an isolated zero of $\p_{x_1}f$.

From now we suppose that $f(x_1,x_2)=x_2\varphi(x_1,x_2)$, it is
easy to see that  $\varphi $ is weighted homogeneous of degree
$d-w_2$ (type $(d-w_2; w_1,w_2))$, we have that
 $$
\parallel\p f\parallel^2=\mid x_2\p_{x_1}\varphi\mid^2+\mid\varphi +
x_2\p_{x_2}\varphi\mid^2
$$

Moreover, it follows from the generalized Euler identity that
$$
\varphi(x_1,x_2)=\frac{w_1}{d-w_2}x_1\p_{x_1}\varphi(x_1,x_2)+\frac{w_2}{d-w_2}x_2\p_{x_2}\varphi(x_1,x_2).
$$
Then, we obtain that
$$
\parallel\p f\parallel^2=\parallel(\frac{w_1}{d-w_2}x_1,x_2)
\parallel^2\mid\p_{x_1}\varphi\mid^2
+2\frac{w_1d}{d-w_2}x_1x_2\p_{x_1}\varphi\p_{x_2}\varphi + \mid
\frac{d}{d-w_2}x_2\p_{x_2}\varphi \mid^2.
$$
But, it follows from $w_1>w_2$  that
$$
d_w((\p_{x_1}\varphi)^2)=2(d-w_2-w_1)<d-w_2-w_1+d-w_2-w_2=d_w(\p_{x_1}\varphi
\p_{x_2}\varphi)<d_w((\p_{x_2}\varphi)^2).
$$
Therefore , by the origin is an isolated zero of $\p_{x_1}\varphi$
we get that
$$
\parallel\p f\parallel\gtrsim
\parallel(x_1,x_2)\parallel\rho^{d-w_2-w_1}=\parallel(x_1,x_2)\parallel(\mid x_1\mid^{1/w_1}+\mid
x_2\mid^{1/w_2})^{d-w_2-w_1}\gtrsim\parallel(x_1,x_2)\parallel^{\frac{d-w_1}{w_2}}.
$$
Hence $L(f)\leq \frac{d-w_1}{w_2}$.

This ends the proof of Theorem \ref{main} in the first case.\\

{\bf Case 2.} In this case, we suppose that $\{\frac{\p f}{\p
x_1}(x_1,x_2)=0\}\nsubseteq\{x_2=0\}$. Let $a=(a_1,a_2)\in\{\frac{\p
f}{\p x_1}(x_1,x_2)=0\}$ and $a_2\neq0$, take an analytic path
$\varphi (t)=(t^{w_1}a_1,t^{w_2}2_2)=t\cdot a$, then from
(\ref{0.1}) we get
$$
L(f)\geq \frac{\text{ord}(\grad
f(\varphi(t))}{\text{ord}(\varphi(t))} \geq\frac{d-w_2}{w_2}.
$$

On the other hand, by the proposition \ref{Hassane}, we obtain that
$$
\rho^{w_2}\parallel \p f\parallel\gtrsim\|\text{grad}_wf(x)\|_w
\gtrsim \rho(x)^d.
$$
Then
$$
\parallel \p f\parallel\gtrsim \rho^{d-w_2}=(\mid x_1\mid^{1/w_1}+\mid
x_2\mid^{1/w_2})^{d-w_2}\gtrsim
\parallel(x_1,x_2)\parallel^{\frac{d-w_2}{w_2}}.
$$
Hence $L(f)\leq \frac{d-w_2}{w_2}$.

This ends the proof of Theorem \ref{main}.

\bigskip
We conclude with several examples.
\begin{example}
Let
$$
f(x,y)=x^3+xy^6 + y^9,
$$
$f$ is  weighted homogenous of type $(9; 3, 1)$ defining an isolated
singularity, since $\{\p_xf=0\}=\{0\}$, then by theorem \ref{main}.
we get $L(f)=6$. Also, for $g(x,y)=x^3-xy^6+ y^9$ can be seen as
weighted homogenous of the same type, but
$\{\p_xg=0\}\nsubseteq\{y=0\}$, hence by theorem \ref{main} we get
$L(g)=8$.

\end{example}
\begin{example}
Let
$$
f(x,y)=y(x^5+xy^{12} + y^{15}),
$$
$f$ is  weighted homogenous of type $(16; 3, 1)$ defining an
isolated singularity, since $\{\p_xf=0\}\subset\{y=0\}$, then by
theorem \ref{main} we get $L(f)=13$. Also, for $g(x,y)=y(x^5-xy^{12}
+ y^{15})$ can be seen as weighted homogenous of the same type, but
$\{\p_xg=0\}\nsubseteq\{y=0\}$, so by theorem \ref{main} we obtain
$L(g)=15$.

\end{example}

\bigskip


\begin{thebibliography}{99}

\bibitem{OMA} O. M. Abderrahmane, \emph{The  {\L}ojasiewicz exponent for weihhted homogeneous
polynomial with isolated singularity, } Glasgow Mathematical
Journal, FirstView Articles, DOI:
http://dx.doi.org/10.1017/S001708951600029X (About DOI), 10 pages.
Published online: 10 June 2016





 \bibitem{BL} J. Bochnak  and S. Lojasiewicz, \emph{A converse of the Kuiper-Kuo theorem,} Proc. of
Liverpool Singularities Symposium I (C.T.C. Wall, ed), Lect. Notes
in Math. \textbf{192}, pp. 254-261, Springer, 1971.

 \bibitem{Bochnak-Risler} J. Bochnak et J. Risler  \emph{ Sur les exposants de Lojasiewicz, } Comment. Math. Helvetici \textbf{50} (1975) 493-507

\bibitem{SB} S. Brzostowski, \emph{The {\L}ojasiewicz Exponent of
Semiquasihomogeneous Singularities,} Bull. London Math. Soc. (2015)
\textbf{47 (5)}: pp. 848-852.


\bibitem{AH} A. Haraux and T.S. Pham, \emph{On the \L ojasiewicz exponents of
quasi-homogeneous functions, } Journal of Singularities Volume
\textbf{11} (2015), 52-66




\bibitem{kuiper} N. Kuiper, \emph{$C^{1}$-equivalence of functions near isolated
critical points, } Sympos. Infinite Dimensional Topology (Baton
Rouge, 1967), Annals of Math. Studies  \textbf{69}, pp. 199--218,
Princeton UP, 1972.

\bibitem{kuo} T. C. Kuo , \emph{On $C^{0}$-sufficiency of jets of potential functions, }
Topology \textbf{8} (1969), 167--171.



\bibitem{LT} M. Lejeune-Jalabert and B. Teissier, \emph{Cl\^oture integrale des
id\'eaux et \'equisingularite, } in: S\'eminaire Lejeune-Teissier,
Centre de Math\'ematiques, \'Ecole Polytechnique, Universit\'e
Scientifique et Medicale de Grenoble, 1974.




\bibitem{LP} L. Paunescu,
\emph{A weighted version of the Kuiper-Kuo-Bochnak-{\L}ojasiewicz
theorem,} J. Algebr. Geom. \textbf{2}, 69--79 (1993)




\end{thebibliography}
\end{document}